\numberwithin{equation}{section}
\newtheorem{Theorem}{Theorem}[section]
\newtheorem{Lemma}[Theorem]{Lemma}
\newtheorem{Corollary}[Theorem]{Corollary}
\newtheorem{Definition}[Theorem]{Definition}
\numberwithin{equation}{section}
\def\ab{{\bold a}}
\def\frk{\frak}               
\def\mm{{\frk m}}
\def\Phi{{\frk n}}
\def\Phi{{\frk N}}
\def\Implies{\ifmmode\Longrightarrow \else
        \unskip${}\Longrightarrow{}$\ignorespaces\fi}
\def\implies{\ifmmode\Rightarrow \else
        \unskip${}\Rightarrow{}$\ignorespaces\fi}
\def\iff{\ifmmode\Longleftrightarrow \else
        \unskip${}\Longleftrightarrow{}$\ignorespaces\fi}
\newcommand\Ass{\operatorname{Ass}}
\newcommand\supp{\operatorname{Supp}}
\newcommand\depth{\operatorname{depth}}
\newcommand\Union{\operatorname{\bigcup}}
\begin{document}

\title [Stable set of associated prime ideals] {The stable set of associated prime ideals of a squarefree principal Borel ideal}
\author [A. Aslam] {Adnan Aslam}

\address{Adnan Aslam, Abdus Salam School of Mathematical Sciences GC University, Lahore.} \email{adnanaslam15@yahoo.com}

\subjclass{13C13, 13A30, 13F99,  05E40}

\begin{abstract}
It is shown that a squarefree principal Borel ideal satisfies the persistence property for the associated prime ideals. For the graded maximal ideal we compute the index of stability with respect to squarefree principal Borel ideals and determine their stable set of associated prime ideals.
\end{abstract}
\maketitle

\section*{Introduction}

In recent years the stable set of monomial ideals has been studied in various papers, see for example \cite{BHR}, \cite{HRV}, \cite{MMV}, \cite{MV} and \cite{VT}.  By a result of Brodmann \cite{Br2}, for  any graded ideal $I$ in the polynomial ring $S$ there exists an integer $k_0$ such that  $\Ass(I^k)=\Ass(I^{k+1})$ for $k\geq k_0$. The smallest integer $k_0$ with this property is called the index of stability of $I$ and $\Ass(I^{k_0})$ is  called the set of stable prime ideals of $I$. A  prime ideal $P\in \Union_{k\geq 1}\Ass(I^k)$ is said to be persistent  with respect to $I$ if whenever $P\in \Ass(I^k)$ then $P\in \Ass(I^{k+1})$, and the ideal $I$ is said to satisfy the persistence property  if all prime ideals $P\in \Union_{k\geq 1}\Ass(I^k)$ are persistent. It is an open question whether all squarefree monomial ideals satisfy the persistence property.

In this paper we show that any squarefree principal Borel ideal satisfies  the persistence property. The strategy of the proof is the same as that applied in \cite{HRV}. Indeed,   by using a result of De Negri \cite{DN} it follows  that all powers of a squarefree principal Borel ideal have linear resolutions. This fact together with the result that all monomial localizations of a  squarefree principal Borel ideal are again  squarefree principal Borel ideals, as shown in Theorem~\ref{localization}, we conclude  in Section 1 that this class  of ideals  satisfies the persistence property, see Corollary~\ref{persistence}.

In Section 2 we answer the question for which squarefree principal Borel ideals the graded maximal ideal of $S$ is a stable associated prime ideal. The answer is given in Theorem~\ref{important}. For the proof of this result we use   a formula about linear quotients from \cite[Section 2]{BEOS}. Then in the following Theorem~\ref{lamda1} we give a precise formula for the index of stability of the graded maximal ideal with respect to the given squarefree principal Borel ideal. As a consequence of this formula we show in Corollary~\ref{bound} that this  index stability is bounded above by the degree of the generators of the Borel ideal. Finally in the last Section~3 we describe in Theorem~\ref{main} the stable set of associated prime ideals for any  squarefree principal Borel ideal and conclude the paper with an explicit example demonstrating this theorem.

\section{The persistence property for squarefree principal Borel ideals}
Let $K$ be a field and $S=K[x_1,\ldots,x_n]$  the polynomial ring over $K$ in $n$ indeterminates. A monomial $u=x^\ab= x_1^{a_1}x_2^{a_2}\cdots x_n^{a_n}$  of degree $d$ we often write in the form $u=x_{i_1}x_{i_2}\cdots x_{i_r}$ with $1\leq i_1\leq i_2\leq \cdots \leq i_r \leq n$. The monomial $u$ is squarefree, if $\deg_{x_i}(u)\leq 1$ for all $i$, or equivalently, if $i_1<i_2<\cdots <i_r$.

We  set $\deg_{x_i}(u)=a_i$ for $i=1,\ldots,n$, and set $\min(u)=\min\{i\:\; a_i\neq 0\}$ and $\max(u)=\max\{i\:\; a_i\neq 0\}$.

For a monomial  ideal $I$   denote by $G(I)$ the unique minimal set of monomial generators of $I$.

We first recall some concepts related to stable ideals. Let $I$ be a monomial ideal in $S$ generated in single degree. Then $I$ is called a {\em $k$-strongly stable ideal}, if  all monomials $u\in G(I)$ are of the form $x_1^{a_1}\cdots x_n^{a_n}$ with $a_i\leq k$,  and for all $u\in G(I)$ and all integers  $1\leq i<j\leq n$ such that $x_j$ divides $u$ and $\deg_{x_i}(u)<k$ we have  $x_i(u/x_j)\in G(I)$. A monomial ideal is called a squarefree strongly stable ideal if it is a $1$-strongly stable ideal.

Let $I$ be a $k$-strongly stable ideal, then the elements $u_1,\ldots,u_r \in G(I)$ are called {\em Borel generators} of $I$, if $I$ is the smallest $k$-strongly stable ideal containing $u_1,\ldots,u_r$. If $u_1,\ldots,u_r$ are  Borel generators of $I$, we  set $B_k(u_1,\ldots,u_r)=G(I)$.

\begin{Definition}
{\em A monomial ideal is called $k$-strongly stable {\em principal} ideal, if there exist a monomial $u\in G(I)$ such that $I=(B_k(u))$}.
\end{Definition}

Let $u=x_{i_1}x_{i_2}\cdots x_{i_d}$ with $1\leq i_1 < i_2< \cdots < i_d\leq n$ be a squarefree monomial in $S$. We observe that
\[
B_1(u)=\{x_{j_1}\cdots x_{j_d}\:\,1\leq j_1 < j_2< \cdots < j_d\leq n \quad \text{and} \quad j_k\leq i_k \quad \text{for} \quad 1\leq k \leq d \}
\]

Let $P$ be a monomial prime ideal in $S$. Then $P$ is generated by a subset of $\{x_1,\ldots,x_n\}$, thus there exist a set $A\subset [n]$ such that $P=P_A$, where $P_A=\{x_i\:\, i\notin A\}$.

Now let $I$ be arbitrary monomial ideal in $S$. Then there exist a unique monomial ideal $I(P_A)\subset S_A$ where $S_A=K[x_i\:\, i\in A]$ such that
\[
I(P_A)S_{P_A}=IS_{P_A}.
\]

Notice that $I(P_A)$ may also be viewed as saturation of $I$ with respect to $u=\prod_{i\in A}x_i$. In other words
\[
I(P_A)=I:u^{\infty}=\bigcup_{k\geq 0} I:u^k.
\]
The ideal $I(P_A)$ is called the {\em monomial localization} of $I$ with respect to the monomial prime ideal $P_A$.

Observe that for any two subsets $A\subset B$ we have
\begin{eqnarray}
\label{double}
I(P_A)(P_{B\setminus A})=I(P_B).
\end{eqnarray}
\begin{Theorem}
\label{localization}
Let $I\subset S$ be a squarefree strongly stable principal ideal, and $A$ a subset of $[n]$. Then $I(P_A)$ is  squarefree strongly stable principal ideal in $S_A$.
\end{Theorem}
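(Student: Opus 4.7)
The plan is to reduce to the case $|A|=1$ via the identity \eqref{double} and then to exhibit an explicit Borel generator for $I(P_{\{a\}})$.

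Using \eqref{double}, one sees that $I(P_{A'\cup\{a\}}) = I(P_{A'})(P_{\{a\}})$ for any $A'\subset [n]$ and $a\notin A'$. Hence, by induction on $|A|$, it suffices to treat the case $A=\{a\}$, which can then be applied repeatedly to the principal Borel ideal $I(P_{A'})$ in its own ambient polynomial ring. The base case $|A|=0$ is trivial since $I(P_\emptyset)=I$.

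For the single-variable case, write $u=x_{i_1}\cdots x_{i_d}$ with $i_1<\cdots<i_d$ and let $a\in [n]$. If $a>i_d$, then no element of $B_1(u)$ is divisible by $x_a$, so $I(P_{\{a\}})$ has the same generators as $I$ (viewed in the appropriate subring) and $u$ remains its Borel generator. Otherwise, setting $k^*:=\min\{k:i_k\geq a\}$, I claim that the Borel generator of $I(P_{\{a\}})$ is
\[
u' := u/x_{i_{k^*}}.
\]
Since $i_{k^*-1}<a\leq i_{k^*}$, inserting $x_a$ in place of $x_{i_{k^*}}$ in the sorted support of $u$ produces a monomial $v^*\in B_1(u)$ whose truncation by $x_a$ is exactly $u'$; this exhibits $u'\in I(P_{\{a\}})$. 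The strongly stable structure inherited from $I$ then delivers the inclusion $(B_1(u'))\subseteq I(P_{\{a\}})$.

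The main technical step is the reverse inclusion: every minimal generator $w$ of $I(P_{\{a\}})$ lies in $B_1(u')$. Writing $w=x_{j_1}\cdots x_{j_{d-1}}$ as the truncation of some $v^*\in B_1(u)$ divisible by $x_a$, and letting $k_0$ denote the position of $a$ inside the sorted support of $v^*$, the Borel condition on $v^*$ yields $j_\ell\leq i_\ell$ for $\ell<k_0$ and $j_\ell\leq i_{\ell+1}$ for $\ell\geq k_0$; the coordinates of $u'$ are $u'_\ell=i_\ell$ for $\ell<k^*$ and $u'_\ell=i_{\ell+1}$ for $\ell\geq k^*$. The hard part will be verifying $j_\ell\leq u'_\ell$ cleanly across the case split, and the crucial observation enabling this is that $a\leq i_{k_0}$ forces $k_0\geq k^*$. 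This excludes the one problematic range $k_0\leq \ell<k^*$ in which the available bound $j_\ell\leq i_{\ell+1}$ would be too weak to deliver $j_\ell\leq i_\ell=u'_\ell$; in every remaining subcase the required inequality reduces directly to one of the Borel bounds already in hand. Combining both inclusions completes the single-variable case, and the induction finishes the theorem.
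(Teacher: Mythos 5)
Your strategy coincides with the paper's: reduce to $|A|=1$ by (\ref{double}) and induction, take $u'=u/x_{i_{k^*}}$ with $k^*=\min\{k:\ i_k\ge a\}$ as the new Borel generator (this is exactly the paper's $v$ in (\ref{local})), and prove the two inclusions; your pivotal observation that $a\le i_{k_0}$ forces $k_0\ge k^*$ is precisely the paper's step ``suppose $t<j$, then $i_{j-1}<k\le i_t\le i_{j-1}$, a contradiction,'' and the resulting coordinate comparison across the case split is carried out correctly.

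There is, however, one missing step in the reverse inclusion. You treat every minimal generator of $I(P_{\{a\}})$ as a degree-$(d-1)$ truncation $v^*/x_a$ with $v^*\in B_1(u)$ divisible by $x_a$, but $I(P_{\{a\}})=I:x_a^{\infty}$ is generated by those truncations \emph{together with} all $w\in G(I)$ not divisible by $x_a$, and nothing in your write-up rules out that such a degree-$d$ monomial is itself a minimal generator of the localization, escaping your case analysis entirely. The paper closes this by showing that for $a\le i_d$ every $w=x_{j_1}\cdots x_{j_d}\in B_1(u)$ with $x_a\nmid w$ is a multiple of a truncation: choosing $t$ with $j_{t-1}<a<j_t$ (or $t=d$ when $a>j_d$), the monomial $x_aw/x_{j_t}$ again lies in $B_1(u)$ because $a\le i_t$, so $w/x_{j_t}$ is a degree-$(d-1)$ generator of $I(P_{\{a\}})$ dividing $w$. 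Without this (or some substitute), the inclusion $I(P_{\{a\}})\subseteq (B_1(u'))$ is not established. A smaller point in the same vein: the assertion that ``the strongly stable structure inherited from $I$'' yields $(B_1(u'))\subseteq I(P_{\{a\}})$ deserves the one-line verification that $I(P_{\{a\}})$ really is closed under Borel exchanges (if $wx_a\in G(I)$, $i<j$, $i,j\ne a$, and $x_j\mid w$, then $x_i(wx_a)/x_j\in I$, hence $x_iw/x_j\in I(P_{\{a\}})$), which the paper spells out.
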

\begin{proof}

Let $u=x_{i_1}\cdots x_{i_d}$ be the Borel generator of $I$, and let $k\in[n]$. We first show that  $I(P_{\{k\}})$ is again a squarefree strongly stable monomial ideal with Borel generator $v$ where
\begin{eqnarray}
 \label{local}
 v=\left\{
  \begin{array}{ll}
    u, & \hbox{if \,  $k>\max(u)$;} \\
    u/x_{i_j}, & \hbox{if \,  $i_{j-1}<k\leq i_j$.}
  \end{array}
\right.
\end{eqnarray}
 Indeed, we claim that

 \begin{eqnarray}
 \label{formula}
 I(P_{\{k\}})=(B_1(v)).
 \end{eqnarray}

First we show that $I(P_{\{k\}})$ is a strongly stable ideal in $S_{\{k\}}$ generated either in degree $d-1$ or in degree $d$. Observe that $I(P_{\{k\}})$ is generated by the monomials $w\in G(I)$ such that $x_k\nmid w$ together with monomials $w \in S_{\{k\}}$ such that $wx_k\in G(I)$. It follows that $G(I)$ has generators at most in degree $d-1$ and in degree $d$, and generated only in degree $d$ if no element in $G(I)$ is divisible by $x_k$.

Suppose first that $k>i_d$. Then $x_k$ divides no element in $G(I)$, and hence $I(P_{\{k\}})=(B_1(u))=(B_1(v))$, and we are done.

Next suppose that $k\leq i_d$. We show that each $w\in G(I)$ which is not divisible by $x_k$ is a multiple of a generator of of degree $d-1$ in $I(P_{\{k\}})$. Let $w=x_{j_1}\cdots x_{j_d}$ be a monomial in $B_1(u)$, then $j_1\leq i_1,\ldots,j_d\leq i_d$, and suppose that $x_k$ does not divide $w$. If $k>j_d$ we set $w'=x_k(w/x_{j_d}$. Otherwise there exists an integer $t$ such that $j_{t-1}<k<j_t$, and we set $w'=x_k(w/x_{j_t}$. In both cases, $w'=x_k(w/x_{i_j})$ belongs to $I$ and $x_k$ divides $w'$. Therefore $w'/x_k$ is an element of degree $d-1$ in $I(P_{\{k\}})$, and $w=x_{i_j}(w'/x_k)$. In conclusion we find that $I(P_{\{k\}})$ is generated in degree $d-1$ if $k\leq i_d$.

It remains to be shown that $(B_1(v)) I(P_{\{k\}})$ if $k\leq i_d$. We first prove that $(B_1(v))\subset I(P_{\{k\}})$. Since  both ideals $I(P_{\{k\}})$ and $(B_1(v))$ are generated in degree $d-1$ in the case that $k\leq i_d$, and since $v\in I(P_{\{k\}})$, the inclusion $(B_1(v))\subset I(P_{\{k\}})$ will follow, once we have shown that $I(P_{\{k\}})$ is squarefree strongly stable. To see this, let $w\in G(I(P_{\{k\}}))$, then $wx_k\in G(I)$. Let $1\leq i<j\leq n$ with $i,j\neq k$ and such that $x_j|w$ and $x_i\nmid w$. It follows that $x_j|wx_k$ and $x_i\nmid wx_k$. Since $I$ is a squarefree strongly stable ideal, we have that $x_i(wx_k)/x_j=(x_iw/x_j)x_k \in I$, and hence $x_iw/x_j\in I(P_{\{k\}})$. This shows that $I(P_{\{k\}})$ is indeed squarefree strongly stable.

Next we show that $I(P_{\{k\}})\subset (B_1(v))$, let $w=x_{l_1}\cdots x_{l_{d-1}}\in G(I(P_{\{k\}}))$. Then $wx_k=x_{l_1}\cdots x_{l_{t-1}}x_kx_{l_t}\cdots x_{l_{d-1}}\in G(I)$, where $l_{t-1}<k\leq l_t$ for some $t$. As $I$ is a squarefree strongly stable principal ideal with Borel generator $u=x_{i_1}\cdots x_{i_d}$, it follows that $l_1\leq i_1,\ldots,l_{t-1}\leq i_{t-1},k\leq i_t,l_t\leq i_{t+1}.\ldots,l_{d-1}\leq i_d$. To show that $v$ is the unique Borel generator of $I(P_{\{k\}})$, it suffices to show that
\begin{eqnarray}
\label{ineq}
l_1\leq i_1,\ldots,l_{j-1}\leq i_{j-1},l_j\leq i_{j+1}.\ldots,l_{d-1}\leq i_d.
\end{eqnarray}
 Suppose $t<j$, then $i_{j-1}<k\leq i_t\leq i_{j-1}$, a contradiction. Therefore $t\geq j$, and the inequalities (\ref{ineq}) are satisfied and this proves our claim.

 Finally we consider the general case. Let $A=\{{j_1},\ldots ,{j_r}\}\subset [n]$, and set  $B=\{{j_1},\ldots ,{j_{r-1}}\}$. We proceed by induction on $r$. If $r=1$, then the statement follows from what we have already shown. Let $B=\{{j_1},\ldots ,{j_{r-1}}\}$. Then $B\subset A$ and $I(P_B)(P_{\{j_r\}})=I(P_A)$. By our induction hypothesis, $I(P_B)$ is a squarefree strongly stable principal ideal in $S_B$. Hence the desired conclusion follows from the case discussed in the first part of the proof.
\end{proof}

 A prime ideal $P\in V(I)$  is said to be a {\em persistent prime ideal} of $I$, if whenever $P\in \Ass(I^k)$ for some exponent $k$, then $P\in \Ass(I^{k+1})$. If this happens to be so for $k$, then of course we have $P\in \Ass(I^{\ell})$ for all $\ell\geq k$. The ideal $I$ is said to have the {\em persistence property} if all prime ideals $P\in \Union_k\Ass(I^k)$ are persistent prime ideals.
\begin{Corollary}
\label{persistence}
Let $I$ be squarefree strongly stable principal ideal. Then $I$ satisfies the persistence property.
\end{Corollary}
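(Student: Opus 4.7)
The plan is to follow the two-step strategy of \cite{HRV}: first reduce the persistence question at an arbitrary monomial prime to persistence at the graded maximal ideal by means of monomial localization, and then settle the maximal-ideal case using the fact that all powers of the ideal have linear resolution.

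For the reduction step I would use the standard equivalence
\[
P_A \in \Ass(S/I^k) \iff \mathfrak n_T \in \Ass\bigl(T/I(P_A)^k\bigr),
\]
where $T$ denotes the polynomial subring in the variables generating $P_A$ and $\mathfrak n_T$ is its graded maximal ideal. This rests on only two facts: that $\Ass$ can be detected after localization at $P_A$, and that monomial localization commutes with taking powers, i.e., $(I^k)(P_A) = I(P_A)^k$. Combined with the composition formula \eqref{double}, the persistence question for $I$ at any $P_A$ is turned into the persistence question for the graded maximal ideal of the localization $I(P_A)$.

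The second step is then formal: by Theorem~\ref{localization}, every monomial localization $I(P_A)$ is itself a squarefree strongly stable principal ideal in a smaller polynomial ring. Hence it suffices to prove that for every squarefree strongly stable principal ideal $J$ in a polynomial ring $T$ with graded maximal ideal $\mathfrak n_T$, the implication $\mathfrak n_T \in \Ass(T/J^k)\Rightarrow \mathfrak n_T \in \Ass(T/J^{k+1})$ holds.

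The third step, which I expect to be the crux of the argument, combines De Negri's theorem \cite{DN}, asserting that every power $J^k$ of a squarefree strongly stable principal ideal has a linear resolution, with the lemma from \cite{HRV} which, under this very hypothesis of linear powers, yields persistence of the graded maximal ideal. Concretely, by Auslander--Buchsbaum $\mathfrak n_T \in \Ass(T/J^k)$ is equivalent to $\depth(T/J^k)=0$, and one exploits that for monomial ideals with linear powers the depth function $k\mapsto \depth(T/J^k)$ cannot climb back above zero once it has reached it. This is the only place where the specific homological structure of the class really enters; Steps 1 and 2 are essentially bookkeeping once Theorem~\ref{localization} is in hand.
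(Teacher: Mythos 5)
Your proposal is correct and follows essentially the same route as the paper: reduce to the graded maximal ideal via monomial localization, invoke Theorem~\ref{localization} to stay within the class, use De Negri's result (via \cite{GHP}) to get linear resolutions for all powers, and conclude by the non-increasing depth function of \cite{HH1}. The only cosmetic differences are in attribution (the paper routes the depth monotonicity through \cite[Proposition 2.1]{HH1} rather than a lemma of \cite{HRV}, and the $\depth = 0$ criterion is the standard characterization of the maximal ideal as an associated prime rather than Auslander--Buchsbaum), neither of which affects the argument.
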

\begin{proof}
Let $P\in \Ass(I^k)$ for some $k>0$. Since $I$ is a monomial ideal it follows that $P$ is a monomial prime ideal. Therefore there exist a subset $A\subset [n]$ such that $P=P_A$. One has $P_A\in \Ass(I^k)$ if and only if $\mm \in \Ass(I(P_A)^k)$, where $\mm$ is the graded maximal ideal of $S_A$. We know that $\mm \in \Ass({I(P_A)}^k)$ if and only if $\depth({I(P_A)}^k)=0$. Since $I$ is  a squarefree strongly stable principal ideal, Theorem \ref{localization} implies that ${I(P)}$ is again a squarefree strongly stable principal ideal. In \cite[Proposition 3.4]{DN}, De Negri has shown that powers of squarefree strongly principal ideals are $k$-strongly stable principal ideals. Therefore, since for all $l$, ${I(P_A)}^l$  are $k$-strongly stable principal ideals, it follows from \cite[Theorem 2.1]{GHP} that $I(P_A)^l$ has a linear resolution for all $l$. Now applying  \cite[Proposition 2.1]{HH1} we have that $\depth({I(P_A)}^l)\geq \depth({I(P_A)}^{l+1})$ for all $l$. In particular, if $\mm \in \Ass(I(P_A)^k)$ then  $\mm\in \Ass(I(P_A)^{k+1})$. This implies that $P_A\in \Ass(I^{k+1})$, as desired.
\end{proof}

\section{When is the maximal ideal a stable associated prime ideal?}
The following theorem give an answer to the question raised in the title of this section. The trivial case that $u=x_1$ will not be considered in the following discussion.
\begin{Theorem}
\label{important}
Let $I\subset S=K[x_1,x_2,\ldots,x_n]$ be squarefree strongly stable principal ideal with Borel generator $u\neq x_1$. Then $\mm\in \Ass(I^k)$ for some $k\geq 1$ if and only if $\min(u)>1$ and $\max(u)=n$.
\end{Theorem}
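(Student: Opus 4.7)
The plan is to prove the two directions separately. The necessity direction $(\Rightarrow)$ is a short structural argument, while the sufficiency direction $(\Leftarrow)$ rests on the linear-quotient formula of \cite[Section~2]{BEOS} for $k$-strongly stable principal ideals.

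For $(\Rightarrow)$, assume $\mm \in \Ass(I^k)$ for some $k \geq 1$. If $\min(u) = 1$, then the inequality $j_1 \leq i_1 = 1$ forces $j_1 = 1$ in every Borel companion of $u$, so $x_1$ divides every element of $G(I)$, and we may factor $I = x_1 \tilde{I}$ for a monomial ideal $\tilde{I} \subset K[x_2, \ldots, x_n]$. Hence $I^k = x_1^k \tilde{I}^k = (x_1^k) \cap (\tilde{I}^k S)$, the last equality using the disjointness of the two variable sets. The resulting injection $S/I^k \hookrightarrow S/(x_1^k) \oplus S/(\tilde{I}^k S)$ forces $\Ass(S/I^k) \subseteq \{(x_1)\} \cup \Ass(S/\tilde{I}^k S)$; every prime in the second set is generated by a subset of $\{x_2, \ldots, x_n\}$, so neither alternative equals $\mm$. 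If instead $\max(u) < n$, no generator of $I$ (or of $I^k$) involves $x_n$, so $I^k$ is extended from $K[x_1, \ldots, x_{n-1}]$ and every associated prime of $S/I^k$ omits $x_n$. Either case contradicts $\mm \in \Ass(I^k)$.

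For $(\Leftarrow)$, assume $\min(u) \geq 2$ and $\max(u) = n$. By De Negri's theorem (already invoked in the proof of Corollary~\ref{persistence}), each $I^k$ is a $k$-strongly stable principal ideal, so \cite[Section~2]{BEOS} equips $I^k$ with linear quotients under an explicit order of $G(I^k)$ together with a combinatorial description of the set $\mathrm{set}(w) \subseteq \{x_1, \ldots, x_n\}$ attached to each generator $w$. Because linear quotients give $\mathrm{pd}(S/I^k) = 1 + \max_{w} |\mathrm{set}(w)|$, the Auslander--Buchsbaum formula converts the desired conclusion $\mm \in \Ass(S/I^k)$ into the requirement that, for some $k$, there exist $w \in G(I^k)$ with $|\mathrm{set}(w)| = n - 1$. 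I would construct such a $w$ by starting from the ``corner'' generator $u^k$ and performing a small number of Borel moves that spread its support across all but one of the variables: the hypothesis $i_1 \geq 2$ makes $x_1$ an admissible destination of a downward shift (since $1 < i_1$), forcing $x_1 \in \mathrm{set}(w)$, and the hypothesis $i_d = n$ guarantees that $x_n$ appears in $w$ with multiplicity high enough to supply the remaining shifts. Small cases corroborate the choice: for $n = 4$ and $u = x_2 x_4$, the monomial $v = x_1 x_2 x_3$ lies outside $I^2$ but satisfies $v x_i \in I^2$ for every $i$ (e.g.\ $v x_4 = (x_1 x_3)(x_2 x_4)$); for $u = x_2 x_3 x_4$ the analogue is $v = (x_1 x_2 x_3 x_4)^2 \notin I^3$ with $v x_i \in I^3$ for all $i$.

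The main obstacle is the combinatorial bookkeeping inside the BEOS formula: that formula is sensitive to the whole profile $i_1 < \cdots < i_d$, not just to its endpoints, so one has to pick $w$ (and the power $k$) uniformly in $u$ and verify that every variable other than a single distinguished $x_{j_0}$ is produced by a legitimate Borel move. The value of $k$ that emerges from this analysis will coincide with the index of stability computed explicitly in the forthcoming Theorem~\ref{lamda1}.
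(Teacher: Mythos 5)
Your necessity direction is correct and is in fact more elementary than the paper's: the paper disposes of the cases $\max(u)<n$ and $\min(u)=1$ by running the De Negri--BEOS linear-quotient machinery and showing that the maximal colon ideal has fewer than $n-1$ generators, whereas your factorization $I=x_1\tilde I$ with $I^k=(x_1^k)\cap(\tilde I^kS)$ (respectively, the observation that $I^k$ is extended from $K[x_1,\ldots,x_{n-1}]$) reaches the same conclusion with no resolution-theoretic input. That half of the argument stands on its own.

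The sufficiency direction, however, has a genuine gap, and it is precisely the step you yourself flag as ``the main obstacle'': for a general Borel generator $u=x_{i_1}\cdots x_{i_{d-1}}x_n$ with $i_1>1$ you never actually exhibit a generator $w\in G(I^k)$ whose attached colon ideal has $n-1$ generators. Saying that one should ``perform a small number of Borel moves that spread the support'' and then checking $u=x_2x_4$ and $u=x_2x_3x_4$ is not a proof; the entire content of this implication is the uniform construction, which is sensitive to the whole profile $i_1<\cdots<i_{d-1}$ exactly as you observe. The paper supplies it explicitly: writing $u=x_{i_1}\cdots x_{i_r}x_n$ and taking any $k>r$, the monomial $v=x_1^{r}x_{i_1}^{k-1}\cdots x_{i_r}^{k-1}x_n^{k}$ belongs to $B_k(u^k)$ (move one copy of each $x_{i_j}$ down to $x_1$, admissible because $i_1>1$ and because $r<k$ keeps $\deg_{x_1}(v)<k$); since $\max(v)=n$ and the only variable of $v$ with exponent equal to $k$ is $x_n$, the colon formula from \cite[Section 2]{BEOS} yields exactly the $n-1$ generators $x_1,\ldots,x_{n-1}$, whence $\depth(S/I^k)=0$. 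Without this (or an equivalent) explicit element, together with the verification that it really is a Borel companion of $u^k$ with the required exponent profile, the implication remains unproved. Your socle-element checks do suggest a viable alternative route --- exhibit $v$ with $(I^k:v)=\mm$ and $v\notin I^k$ directly --- but that too would need a general construction and a proof of both properties, neither of which you give.
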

\begin{proof}
 Let $u=x_{i_1}\cdots x_{i_d}$, with $1\leq i_1< \cdots < i_d\leq n$. First we show that, if $\max(u)<n$ then $\mm\notin \Ass(I^k)$ for all $k>0$. By using \cite[Proposition 3.4]{DN}, $I^k$ is a $k$- strongly stable principal ideal with Borel generator $u^k$.

  Let $B_k(u^k)=\{u_1,\ldots, u_r\}$. We may assume that $u_i>_{lex} u_j$ for $i<j$. Then  by \cite[Section 2]{BEOS}, one has
  \begin{eqnarray}
  \label{colon}
  (u_1,\ldots,u_{i-1}):u_i &=&(\{x_j: 1\leq j\leq \max(u_i)-1\}\\ \nonumber
&-&\{x_j\in \supp(u_i): \deg_{x_j}(u_i)=k\}).
  \end{eqnarray}
 Let $q(I^k)$ be the maximal of the number of generators of $(u_1,\ldots,u_{i-1}):u_i$ for $i=1,\ldots,r$. By \cite[Eq.~1]{HH1} it is known that $\depth(S/I^k)=n-q(I^k)-1$. Since $\max(u)<n$, we also have $\max(u_i)<n$ for all $i$. It follows therefore from (\ref{colon}) that $q(I^k)<n-1$. Hence, $\depth(S/I^k)>0$, and so $\mm\notin \Ass(I^k)$.

Now we show that if $\min(u)=1$, then $\mm\notin \Ass(I^k)$. In this case, $u=x_1x_{i_2}\cdots x_{i_d}$ with $1< i_2< \cdots < i_d\leq n$ and $d\geq 2$, because $u\neq x_1$. Since $\min(u)=1$, we have $\deg_{x_1}(u_i)=k$ for all $i$. It follows therefore from (\ref{colon}) that $q(I^k)<n-1$. Therefore, $\depth(S/I^k)>0$, and hence $\mm\notin \Ass(I^k)$.

Finally suppose that $\max(u)=n$ and  $\min(u)>1$. Then $u=x_{i_1}x_{i_2}\cdots x_{i_{r}} x_{n}$, where $1<i_1<i_2<\cdots<i_{r}\leq n-1$. Let $k>r$. Then the element $v=x_1^{r}x_{i_1}^{k-1}x_{i_2}^{k-1}\cdots x_{i_{r}}^{k-1}x_{n}^{k}$ belongs to $G(I^k)$. Hence there exists an integer $i$ such that $v=u_i$. Therefore formula (\ref{colon}) implies that $q(I^k)=n-1$. As a consequence, we have $\depth(S/I^k)=0$, and hence $\mm\in \Ass(I^k)$.
\end{proof}

Let as before $I=(B_1(u))$ and assume that $\min(u)>1$ and  $\max(u)=n$. Then $u=x_{i_1}x_{i_2}\cdots x_{i_{d-1}}x_n$ with $1<i_1<i_2<\cdots <i_{d-1}<n$. In the following we want to determine the smallest  integer  $k$ for which $\mm \in \Ass(I^k)$. To do this we have to introduce some notation. We write the set $\{i_1,\ldots,i_{d-1},n\}$  in a unique way as a disjoint union of intervals. In other words,
\[
\{i_1,\ldots,i_{d-1},n\}=\bigcup_{j=1}^m[a_j,b_j] \quad \text{with} \quad a_i\leq b_j<a_{j+1}\leq b_{j+1}\quad \text{for}\quad j=1,\ldots,m-1.
\]
The number
\begin{eqnarray}
 \label{local1}
 l_j=\left\{
  \begin{array}{ll}
    b_j-a_{j}+1, & \hbox{if \,  $j<m$;} \\
    n-a_m, & \hbox{if \,  $j=m$.}
  \end{array}
\right.
\end{eqnarray}
is the length of interval $[a_j,b_j]$ for $j<m$, and $l_m$ is the length of the last interval minus one.

Similarly we define the length of the gap intervals
\begin{eqnarray}
 \label{local1}
 k_j=\left\{\begin{array}{ll}
 a_1-1, & \hbox{if \,  $j=1$;} \\
 a_j-b_{j-1}-1, & \hbox{if \,  $j>1$.}
 \end{array}\right.
\end{eqnarray}

\medskip
Let $I\subset S$ be monomial ideal and $P\subset S$ be a monomial prime ideal. Suppose that $P\in \Ass(I^k)$ for some $k$. We denote by $\lambda (P;I)$ the smallest index $k$ for which this is the case, and set $\lambda (P;I)=\infty$ if $P\notin \Ass(I^k)$ for all $k$. The number  $\lambda (P;I)$ is called the {\em index of stability}  of $P$ with respect to $I$.
\begin{Theorem}
\label{lamda1}
Let $I$ be a squarefree strongly stable principal ideal with Borel generator
\[
x_{i_1}x_{i_2}\cdots x_{i_{d-1}} x_{n} \quad \text{with}\quad 1<i_1<i_2<\cdots<i_{d-1}< n.
\]
With the notation introduced in (\ref{local}) and (\ref{local1}) for the length of the intervals and gap intervals  of the sequence $\{i_1,i_2,\ldots,i_{d-1},n\}$, we have
\begin{eqnarray}
\label{lamda}
\lambda(\mm;I)=\max_{j=1,\ldots,m}\Big\{\Big\lceil \frac{l_1+l_2+\cdots +l_j}{k_1+k_2+\cdots +k_j}\Big\rceil +1 \Big\}
\end{eqnarray}
\end{Theorem}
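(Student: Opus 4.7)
The plan is to reduce the computation of $\lambda(\mm;I)$ to a combinatorial feasibility question on multidegrees, following the strategy used in the proof of Theorem~\ref{important}. By \cite[Proposition 3.4]{DN} we have $G(I^k)=B_k(u^k)$, and combining the colon formula (\ref{colon}) with $\depth(S/I^k)=n-q(I^k)-1$ shows that $\mm\in\Ass(I^k)$ if and only if there exists a monomial $v=x_1^{c_1}\cdots x_n^{c_n}\in B_k(u^k)$ with $c_n\ge 1$ and $c_j\le k-1$ for every $j<n$. Setting $A=\{i_1,\ldots,i_{d-1},n\}$, membership $v\in B_k(u^k)$ is equivalent to $\sum_j c_j=kd$, $c_j\le k$ for all $j$, together with the majorization inequalities
\[
\sum_{j\ge t}c_j\le k\cdot|A\cap[t,n]|\quad\text{for every } 1\le t\le n.
\]
Writing $L_s=l_1+\cdots+l_s$ and $K_s=k_1+\cdots+k_s$, one has $b_s=L_s+K_s$ for $s<m$, while $L_m=d-1$ and $K_m=n-d$.

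For the lower bound, fix $s<m$ and apply the majorization inequality at $t=b_s+1$, which can be rewritten as $\sum_{j\le b_s}c_j\ge kL_s$. Since $c_j\le k-1$ for all $j\le b_s<n$, this forces $(k-1)(L_s+K_s)\ge kL_s$, equivalently $(k-1)K_s\ge L_s$. For $s=m$, the equation $\sum_j c_j=kd$ together with $c_n\le k$ and $c_j\le k-1$ for $j<n$ yields $kd\le k+(k-1)(n-1)$, which rearranges to $(k-1)(n-d)\ge d-1$, that is, $(k-1)K_m\ge L_m$. Taking ceilings in these $m$ inequalities gives the lower bound $k\ge\max_{s}\lceil L_s/K_s\rceil+1$.

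For the upper bound, assume $k\ge\max_{s}\lceil L_s/K_s\rceil+1$ and construct $v$ by setting $c_{i_t}=k-1$ for $t=1,\ldots,d-1$ and $c_n=k$, then distributing the remaining $d-1$ units among the gap indices $\{1,\ldots,n\}\setminus A$ greedily from left to right, each receiving at most $k-1$. With these choices, a short rearrangement shows that the majorization inequality at $t$ becomes $\sum_{j<t,\,j\notin A}c_j\ge|A\cap[1,t-1]|$. The greedy rule yields $\sum_{j<t,\,j\notin A}c_j\ge\min\{(k-1)K_s,d-1\}$ whenever at least the first $K_s$ gap indices lie below $t$; splitting into the cases $t$ inside interval $s$ and $t$ inside gap $s+1$, one checks that $|A\cap[1,t-1]|\le L_s$ in each case, while the hypothesis $(k-1)K_s\ge L_s$ together with $L_s\le d-1$ gives $\min\{(k-1)K_s,d-1\}\ge L_s$. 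The main technical hurdle is this case analysis, but in every case the required inequality reduces to one of the $m$ assumed inequalities $(k-1)K_{s'}\ge L_{s'}$. Combining the two directions yields the claimed formula $\lambda(\mm;I)=\max_{s}\lceil L_s/K_s\rceil+1$.
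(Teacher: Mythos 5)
Your proposal is correct and follows essentially the same route as the paper: reduce via the linear-quotients formula (\ref{colon}) and $\depth(S/I^k)=n-q(I^k)-1$ to finding the least $k$ admitting $v\in B_k(u^k)$ with $\deg_{x_j}(v)<k$ for $j<n$, derive the necessary inequalities $(k-1)K_s\geq L_s$ from the partial-sum (majorization) constraints, and construct $v$ with exponent $k-1$ on the interval indices and the remaining $d-1$ units spread over the gap indices. Your version is somewhat more explicit than the paper's (the full majorization characterization and the greedy left-to-right distribution), but the ideas coincide.
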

\begin{proof}
By \cite[Equation 1]{HH1}, $\mm \in \Ass(I^k)$ if and only if $q(I^k)=n-1$. So we want to find the smallest integer $k$ for which $q(I^k)=n-1$. In other words, we have to find the smallest integer $k$ for which there exist a monomial $v\in B_k(u^k)$ with the property that $\deg_{x_j}(v)<k$ for $j=1,\ldots, n-1$.

If $v\in B_k(u^k)$ with $\deg_{x_i}(v)<k$ for all $i<n$. Then $v$ must be of the form
\[
v=\prod_{s=1}^m(\prod_{i\in [b_{s-1}+1,a_{s-1}]}x_i^{c_i}\prod_{i\in [a_s,b_s]}x_i^{d_i}),
\]
where all $c_i,d_i<k$, except possibly $d_m$ which may be equal to $k$, where $b_0$ is defined to be $0$, and where
\[
\sum_{s=1}^j\sum_{i\in [b_{s-1}+1,a_{s-1}]}c_i \geq \sum_{s=1}^j\sum_{i\in [a_s,b_s]}(k-d_i)
\]
for all $j=1,\ldots, m$. Since
\[
(k_1+k_2+\cdots +k_j)(k-1)\geq \sum_{s=1}^j\sum_{i\in [b_{s-1}+1,a_{s-1}]}c_i,
\]
and
\[
l_1+l_2+\cdots +l_j\leq \sum_{j=s}^j\sum_{i\in [a_s,b_s]}(k-d_i)
\]
it follows that
\[
(k_1+k_2+\cdots +k_j)(k-1)\geq l_1+l_2+\cdots +l_j.
\]

This shows that $\lambda(\mm;I)\geq \max_{j=1,\ldots,m}\Big\{\Big\lceil \frac{l_1+l_2+\cdots +l_j}{k_1+k_2+\cdots +k_j}\Big\rceil +1 \Big\}$.

In order to prove the opposite inequality, we let $k$ be the maximum on the right hand side of the above inequality. Then there exists $c_i<k$ such that
\[
\sum_{s=1}^j\sum_{i\in [b_{s-1}+1,a_{s-1}]}c_i \geq l_1+l_2+\cdots +l_j
\]
for all $j=1,\ldots,m$.
This implies that
\[
v=\prod_{s=1}^m(\prod_{i\in [b_{s-1}+1,a_{s-1}]}x_i^{c_i}\prod_{i\in [a_s,b_s]}x_i^{k-1})x_n
\]
belongs to $B_k(u^k)$, and shows that $\lambda(\mm ;I)\leq k$.
\end{proof}

\begin{Corollary}
\label{bound}
Let $u\in K[x_1,x_2,\ldots, x_n]$ be a squarefree monomial of degree $d$, and let $I$ be the squarefree strongly stable principal ideal with Borel generator $u$. Assume that $\lambda(\mm;I)<\infty$. Then
\begin{enumerate}
\item[(a)] $\lambda(\mm;I)\leq d$.
\item[(b)] $\lambda(\mm;I)=d$ if and only if $u=x_2x_3\cdots x_dx_n$.

\end{enumerate}
\end{Corollary}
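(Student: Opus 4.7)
The hypothesis $\lambda(\mm;I)<\infty$ forces, via Theorem~\ref{important}, the shape $u=x_{i_1}\cdots x_{i_{d-1}}x_n$ with $1<i_1<\cdots<i_{d-1}<n$, so Theorem~\ref{lamda1} applies and I may work directly with the closed formula
\[
\lambda(\mm;I)=\max_{j=1,\ldots,m}\Big\{\Big\lceil \tfrac{L_j}{K_j}\Big\rceil +1\Big\},\qquad L_j=l_1+\cdots+l_j,\quad K_j=k_1+\cdots+k_j.
\]
My first task is to record two book-keeping identities. Summing the interval lengths gives $L_m=d-1$: the intervals $[a_j,b_j]$ partition a set of cardinality $d$, and the definition of $l_m=n-a_m=b_m-a_m$ subtracts exactly one. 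Next, the disjointness condition $b_j<a_{j+1}$ together with $a_1\geq 2$ (since $\min(u)>1$) yields $k_j\geq 1$ for every $j$, hence $K_j\geq j\geq 1$.

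For part (a), since $L_j\leq L_m=d-1$ and $K_j\geq 1$, each quotient satisfies $L_j/K_j\leq d-1$, so $\lceil L_j/K_j\rceil+1\leq d$ uniformly in $j$, whence $\lambda(\mm;I)\leq d$.

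For the ``$\Leftarrow$'' direction of (b), if $u=x_2x_3\cdots x_dx_n$, I split into the cases $n=d+1$ (one interval $[2,d+1]$, giving $l_1=d-1$, $k_1=1$) and $n>d+1$ (two intervals $[2,d]$, $[n,n]$, giving $l_1=d-1$, $l_2=0$, $k_1=1$, $k_2=n-d-1$). In both cases the term at $j=1$ already equals $\lceil (d-1)/1\rceil+1=d$, and the remaining terms are easily seen to be $\leq d$, so the max equals $d$.

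For ``$\Rightarrow$'' of (b), assume $\lambda(\mm;I)=d$, so that $\lceil L_j/K_j\rceil=d-1$ for some index $j$. Then $L_j>(d-2)K_j$ combined with $L_j\leq d-1$ forces (for $d\geq 3$) $K_j<\tfrac{d-1}{d-2}$, hence $K_j=1$, and therefore $L_j=d-1$. The equality $K_j=1$ with each $k_i\geq 1$ forces $j=1$ and $k_1=1$, i.e.\ $a_1=2$. The equality $L_j=d-1=L_m$ means $l_2=\cdots=l_m=0$; since only $l_m$ is allowed to vanish, this gives either $m=1$ or $m=2$ with a singleton last interval $[n,n]$. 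Writing out the resulting intervals in each case recovers exactly $\{i_1,\ldots,i_{d-1},n\}=\{2,3,\ldots,d,n\}$, i.e.\ $u=x_2x_3\cdots x_dx_n$. The subtle step here (and the main obstacle) is the bookkeeping for $j<m$ when $l_m=0$: the formula's index $j$ at which equality is attained need not be $m$ itself, and one has to rule out the apparent ambiguity by using that $l_i\geq 1$ is forced whenever $i<m$ by the definition of the intervals.
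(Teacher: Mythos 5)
Your proof is correct and follows essentially the same route as the paper: both arguments read everything off the closed formula of Theorem~\ref{lamda1}, using $l_1+\cdots+l_m=d-1$ and $k_1+\cdots+k_j\geq 1$ for (a), and analyzing when $\lceil (l_1+\cdots+l_j)/(k_1+\cdots+k_j)\rceil$ can reach $d-1$ for (b); you merely supply the case distinctions ($n=d+1$ versus $n>d+1$, and the bookkeeping forcing $K_j=1$, hence $j=1$, $k_1=1$ and $l_1=d-1$) that the paper's proof leaves implicit. One caveat worth recording: your parenthetical restriction to $d\geq 3$ in the ``only if'' step is not removable, since for $d=2$ the formula yields $\lambda(\mm;I)=2=d$ for every admissible $u=x_{i_1}x_n$ with $i_1>1$ (e.g.\ $u=x_3x_5$), so the equivalence in (b) actually fails in that degenerate case; the paper's own contrapositive argument (claiming $k_1\geq 2$ forces the ceiling below $d-1$) has exactly the same blind spot, so this is a defect of the statement rather than of your argument.
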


\begin{proof}(a)
 Since $l_1+l_2+\cdots +l_m=d-1$, the maximum value of $\lceil \frac{l_1+l_2+\cdots +l_j}{k_1+k_2+\cdots +k_j}\rceil$ is at most $d-1$. Hence  (\ref{lamda}) implies $\lambda(\mm;I)\leq d$.

(b) Let $u=x_2x_3\cdots x_dx_n$. Then by (\ref{lamda}),
\[
\lambda(\mm;I)=\lceil \frac{l_1}{k_1}\rceil +1=d-1+1=d.
\]

Now let $u\neq x_2x_3\cdots x_dx_n$. Then either $k_1\geq 2$, or there exists a $1<j<m$ such that $k_j\geq 1$. So the maximum value of $\lceil \frac{l_1+l_2+\cdots +l_j}{k_1+k_2+\cdots +k_j}\rceil$ is less than $d-1$, and hence by (\ref{lamda}), $\lambda(\mm;I)< d$.
\end{proof}

\begin{Corollary}
\label{values}
Given an integer $d\geq 2$. Then for any integer $2\leq i\leq d$, there exists a squarefree monomial $u$ of degree $d$ in $2d-i+1$ variables such that $\lambda (\mm;I)=i$ for $I=B_1(u)$.
\end{Corollary}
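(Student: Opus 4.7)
The plan is to exhibit, for each pair $(d,i)$ with $2 \leq i \leq d$, an explicit squarefree monomial $u$ of degree $d$ in exactly $n = 2d - i + 1$ variables and then read off $\lambda(\mathfrak{m};I) = i$ directly from the formula in Theorem~\ref{lamda1}, where $I = (B_1(u))$. The boundary case $i = d$ is already settled by Corollary~\ref{bound}(b): the monomial $u = x_2 x_3 \cdots x_{d+1}$ lives in $d + 1 = 2d - d + 1$ variables and satisfies $\lambda(\mathfrak{m}; I) = d$.

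For $2 \leq i < d$ the proposal is
\[
u \;=\; (x_2 x_3 \cdots x_i)\,(x_{i+2}\,x_{i+4}\cdots x_{2d-i})\,x_{2d-i+1},
\]
a squarefree monomial of degree $(i-1) + (d-i) + 1 = d$ whose variables carry indices in $\{2,\ldots,i\} \cup \{i+2, i+4, \ldots, 2d-i\} \cup \{2d-i+1\}$. One reads off $\min(u) = 2 > 1$ and $\max(u) = 2d - i + 1 = n$, so Theorem~\ref{important} guarantees $\lambda(\mathfrak{m}; I) < \infty$ and Theorem~\ref{lamda1} applies.

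The computational core is to identify the interval decomposition of the index set and plug into formula~(\ref{lamda}). I would verify that the set above breaks into the initial block $[2, i]$, the $d - i - 1$ singletons $\{i+2\}, \{i+4\}, \ldots, \{2d-i-2\}$, and the terminal interval $[2d-i,\, 2d-i+1]$, giving $m = d - i + 1$ with $l_1 = i - 1$, $l_j = 1$ for $2 \leq j \leq m$, and $k_j = 1$ throughout. Setting $L_j = l_1 + \cdots + l_j = i + j - 2$ and $K_j = k_1 + \cdots + k_j = j$, formula~(\ref{lamda}) reduces to $\max_{1 \leq j \leq m} \lceil (i+j-2)/j \rceil + 1$.

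The remaining short check is that this maximum is attained at $j = 1$ with value $i - 1$: the inequality $\lceil (i+j-2)/j \rceil \leq i - 1$ is equivalent to $(j - 1)(i - 2) \geq 0$, which holds for all $j \geq 1$ and $i \geq 2$. Hence $\lambda(\mathfrak{m}; I) = i$, as required. There is no conceptual obstacle beyond engineering the construction so the length counts produce exactly $n = 2d - i + 1$; the only mild subtlety is that when $i = d$ the middle product in the general formula degenerates and the proposed terminal pair collides with the initial block at index $d$, which is precisely why that case is handled separately via Corollary~\ref{bound}(b).
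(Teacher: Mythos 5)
Your proposal is correct and uses exactly the same witness monomial as the paper, namely $u=(\prod_{j=2}^{i}x_j\prod_{j=1}^{d-i}x_{i+2j})x_{2d-i+1}$; the paper simply states this monomial without carrying out the verification, while you supply the interval decomposition and the computation via Theorem~\ref{lamda1} (and your separate treatment of $i=d$ is an optional refinement, since the general formula degenerates correctly to $u=x_2\cdots x_dx_{d+1}$ in that case).
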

\begin{proof}
Choose the monomial $u=(\prod_{j=2}^ix_j\prod_{j=1}^{d-i}x_{i+2j})x_{2d-i+1}$.
\end{proof}

\section{The stable set of prime ideals}

Let $I$ be a squarefree strongly stable principal ideal with Borel generator $u=x_{i_1}\cdots x_{i_d}$. In this section we want to determine the subsets $A=\{k_1<\cdots<k_s\}$ of $[n]$, for which the prime ideal $P_A$ belongs to $\Ass^{\infty}(I)$.

We have seen in Theorem~\ref{localization} that $I(P_A)$ is a squarefree strongly stable principal ideal. We denote its Borel generator by $u_A$. We observe that
\begin{equation} \label{obs}
\max(u_A)\geq \max(u_B) \quad \text{for} \quad A\subset B\subset [n].
\end{equation}
For the main result of this section we need the following results.
\begin{Lemma}
\label{notexist}
Let $u=x_{i_1}x_{i_2}\cdots x_{i_d}$ and $A=\{k_1<k_2<\cdots<k_s\}$ with $k_s\leq i_d$. Then the following conditions are equivalent:
\begin{enumerate}
\item[(a)] $\max(u_A)<i_d$.
\item[(b)] $k_{s-j}>i_{d-j-1}$ for some $j\geq 0$.
\item[(c)]  there exist an integer $j$ with $0\leq j \leq s-1$ such that  $l(s-j)\geq d-j$,  where $l(t)=\min\{r\:\; k_t\leq i_r\}$.
\end{enumerate}
\end{Lemma}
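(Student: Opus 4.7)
The strategy is to compute $u_A$ explicitly by iterating the single-variable formula (\ref{local}) from the proof of Theorem \ref{localization}, and then read off when $i_d$ disappears from its support.

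By (\ref{double}), $u_A$ arises from $u$ by processing $k_1<k_2<\cdots<k_s$ one at a time: at step $t$, if $k_t>\max(u^{(t-1)})$ nothing is done, otherwise the smallest variable $x_{i'}$ in the current support $\supp(u^{(t-1)})$ with $i'\geq k_t$ is deleted. Consequently $\supp(u_A)\subseteq\supp(u)=\{i_1,\dots,i_d\}$, and condition (a) is precisely the statement that $i_d$ is among the deleted variables. Let $r_t\in\{1,\dots,d\}$ denote the index in $\{1,\dots,d\}$ of the element removed at step $t$; since $k_t\leq k_s\leq i_d$, a deletion occurs as long as $i_d$ is still present. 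A short minimality argument shows that $r_t<r_{t+1}$ whenever both are defined: otherwise $i_{r_{t+1}}$ would be an admissible choice at step $t$ smaller than $r_t$, contradicting the greedy rule. Combining this with the rule itself yields the recursion $r_t=\max\{l(t),\,r_{t-1}+1\}$ with $r_0:=0$, which telescopes to
\[
r_s=\max_{\ell=1,\dots,s}\bigl(l(\ell)+s-\ell\bigr).
\]

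Since $l(t)\leq d$ for every $t$, the increment $r_t-r_{t-1}$ is at most $d-r_{t-1}$ whenever $r_{t-1}<d$, so the sequence $(r_t)$ cannot jump over $d$. Hence $i_d$ is removed if and only if $r_s\geq d$, i.e.\ $\max_\ell(l(\ell)+s-\ell)\geq d$. Re-indexing $\ell=s-j$ turns this into condition (c). The equivalence (c)$\iff$(b) is immediate from $l(t)=\min\{r\colon k_t\leq i_r\}$: the inequality $l(s-j)\geq d-j$ says no $i_r$ with $r\leq d-j-1$ is $\geq k_{s-j}$, i.e.\ $k_{s-j}>i_{d-j-1}$ (with the convention $i_r=0$ for $r\leq 0$, under which both inequalities are automatic).

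The main technical point will be verifying cleanly the strict monotonicity of $(r_t)$, the closed form for $r_s$, and the no-skip property at $d$; once these are established, the three equivalences follow by a direct change of variables.
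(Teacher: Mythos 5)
Your proposal is correct, but it proves the lemma by a genuinely different route than the paper. The paper splits the work into two separate inductions: (a)$\Rightarrow$(b) is proved by induction on $d$, peeling off the localization at $k_1$ and comparing the resulting chain of inequalities $k_t\leq i_{d-s+t-1}$ with the surviving indices of $v=u_{\{k_1\}}$; (b)$\Rightarrow$(a) is a second induction, this time on the $j$ realizing $|\{r: k_{s-j}\leq i_r\}|\leq j+1$, localizing at $k_{s-j}$ first; and (b)$\Leftrightarrow$(c) is then read off from the definition of $l(t)$. You instead treat the iterated localization as a single greedy deletion process on the positions $1,\dots,d$, establish the strict monotonicity $r_t<r_{t+1}$, the recursion $r_t=\max\{l(t),r_{t-1}+1\}$, and the resulting closed form $r_s=\max_\ell(l(\ell)+s-\ell)$, and obtain (a)$\Leftrightarrow$(c) in one stroke, with (b) as the trivial reformulation. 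I checked the three points you defer and they all go through: minimality of the deleted position at step $t$ together with $k_{t+1}>k_t$ forces $r_{t+1}>r_t$; since the deleted positions form the increasing sequence $r_1<\dots<r_{t-1}$, the smallest surviving position $\geq l(t)$ is exactly $\max\{l(t),r_{t-1}+1\}$, which telescopes to the stated formula; and since $l(t)\leq d$ the formal recursion cannot pass $d$ without first equalling it, at which point $i_d$ is actually deleted and (as $i_d$ was then the smallest survivor $\geq k_t$ and is also the largest index) no further deletions occur. Your approach is somewhat longer to write out in full, but it buys more: it yields an explicit description of $\supp(u_A)$ (hence of $\max(u_A)$ and $\deg(u_A)$), not merely the equivalence asserted in the lemma, and it avoids the delicate point in the paper's (b)$\Rightarrow$(a) argument about which element of $A$ is localized first. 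To turn the sketch into a complete proof you should write out the three deferred verifications explicitly, but no new idea is needed.
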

\begin{proof} We prove the implication (a) \implies (b) by induction on $d$. The assertion is trivial for $d=1$. Now let $d>1$, and suppose $k_{s-j}\leq i_{d-j-1}$ for all $j\geq 0$. Then we have,
\begin{eqnarray}
\label{ineq1}
k_1\leq i_{d-s},k_2\leq i_{d-s+1},\ldots,k_{s-1}\leq i_{d-2},k_s\leq i_{d-1}.
\end{eqnarray}
 Let $B=\{k_2,k_3,\ldots,k_s\}$, and set $v=u_{A\setminus B}$. Then it follows from (\ref{local}) that $v=x_{i_1}x_{i_2}\cdots \widehat{{x_{i_j}}}\cdots x_{i_d}$, where $j=\min \{r\:\, k_1\leq i_r\}$. Since $k_1\leq i_{d-s}$ it follows that $j\leq d-s$. Therefore the corresponding inequalities (\ref{ineq1}) for $B$ compared with the indices of $v$ hold. Since $\max(v)=i_d$ and $\deg(v)=d-1$, we may apply our induction hypothesis and obtain that $\max(v_B)=i_d$. Since $u_A=v_B$, we conclude that $\max(u_A)=i_d$, a contradiction.

(b) \implies (a):
 By assumption, we have $k_{s-t}>i_{d-t-1}$  for some $t\geq 0$. Then $|\{r\:\,k_{s-t}\leq i_r\}|\leq t+1$. Now we show by induction on $j$ that whenever
 \begin{eqnarray}
 \label{new}
 |\{r\:\,k_{s-j}\leq i_r\}|\leq j+1
 \end{eqnarray}
  for some $j$, then $\max(u_A)< i_d$. If $j=0$, then (\ref{new}) implies $i_r$ is the unique index of $u$ with the property that $k_s\leq i_r$. Therefore it follows from (\ref{local}) that $\max(u_A)<i_d$. Suppose now that for $j>0$, the inequality (\ref{new}) holds. Let $v=u_{\{s-j\}}=x_{i_1}x_{i_2}\cdots x_{i_{d-j-1}}x_{i_{d-j+1}}\cdots x_{i_d}$. Therefore the set of indices in $v$ which are greater or equal to $k_{s-j+1}$ is contained in the set $\{i_{d-j+1},\ldots,i_d\}$, whose cardinality is $j$. Our induction hypothesis implies that $\max(v)<i_d$. Since $\max(u_A)\leq \max(v)$, the assertion follows.

  (b) \iff (c): It follows from the definition of the function $l(t)$ that $l(s-j)\geq d-j$ for some $j\geq 0$, if and only if $k_{s-j}>i_{d-j-1}$.
\end{proof}

Given $u=x_{i_1}x_{i_2}\cdots x_{i_d}$ and $A=\{k_1<k_2<\cdots<k_s\}$. Set $k_0=0$ and $k_{s+1}=n+1$, we introduce the following numbers. Let $f=\max \{r\in[s]_0\:\, k_r+1<k_{r+1}\}$ where $[s]_0=0,\ldots,s$. Then $\max(S_A)=k_{f+1}-1$. Let $g=\max \{t\:\,i_t\leq k_{f+1}-1\}$, and set $B=\{k_{f+1},\ldots,k_s\}$. Since $u_A=(u_B)_{A\setminus B}$, it follows by Lemma~\ref{notexist} that $\max(u_A)=\max(S_A)$ if and only if $l(f-j)<g-j$ for $j=0,\ldots,r-1$. Let $h=\min \{j\:,\ i_{j+1}>i_j+1\}$ and set $i_0=0$. Now we are ready to state our main result.

\begin{Theorem}
\label{main}
Let $I\subset S=K[x_1,x_2,\ldots,x_n]$ be squarefree strongly stable principal ideal with Borel generator $u\neq x_1$. With the notation introduced above the following conditions are equivalent:
\begin{enumerate}
\item[(a)] $P_A\in \Ass^{\infty}(I)$.
\item[(b)] {\em (i)} $\min(u_A)>\min(S_A)$ and
{\em (ii)} $\max(u_A)=\max(S_A)$.
\end{enumerate}
Moreover, {\em(b)(i)} holds if and only if $k_t=t$ for $t=0,\ldots,h$, and {\em (b)(ii)} holds if and only if  $l(f-j)<g-j$ for $j=0,\ldots, f-1$.
\end{Theorem}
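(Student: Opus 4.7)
I would reduce the theorem to two earlier results: Theorem~\ref{important}, applied inside $S_A$, for the equivalence (a)$\iff$(b), and Lemma~\ref{notexist} for the combinatorial description of (b)(ii). For (a)$\iff$(b), I would use the standard observation that $P_A\in\Ass(I^k)$ holds if and only if the graded maximal ideal $\mm$ of $S_A$ lies in $\Ass(I(P_A)^k)$. Since Theorem~\ref{localization} guarantees that $I(P_A)$ is itself a squarefree strongly stable principal ideal in $S_A$ with Borel generator $u_A$, applying Theorem~\ref{important} inside $S_A$ (treating the smallest variable of $S_A$ as the analogue of $x_1$ and the largest as that of $x_n$) immediately translates $\mm\in\Ass^\infty(I(P_A))$ into the pair of conditions $\min(u_A)>\min(S_A)$ and $\max(u_A)=\max(S_A)$.

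The key combinatorial ingredient for both (b)(i) and (b)(ii) is the greedy description of $u_A$ obtained by iterating formula (\ref{local}) through (\ref{double}): $u_A$ is built from $u$ by processing each $k_r\in A$ in turn and, at that step, deleting the smallest surviving variable $x_{i_j}$ with $i_j\geq k_r$, provided such an index still exists. Assuming $k_t=t$ for $t=0,\ldots,h$, the greedy matches $k_r=r$ with $i_r=r$ for every $r\leq h$, after which every surviving index is $\geq i_{h+1}\geq h+2$; extending the same matching to any further initial prefix $\{h+1,\ldots,t^{*}\}\subseteq A$ (using $i_r\geq r+1$ for $r>h$) yields $\min(u_A)\geq i_{t^{*}+1}\geq t^{*}+2>\min(S_A)$, so (b)(i) holds. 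Conversely, if $t\in\{1,\ldots,h\}$ is the least element not in $A$, then $\min(S_A)=t$, the greedy matches $k_r=r$ with $i_r=r$ for $r<t$, and all subsequent $k\in A$ exceed $t$, so the index $i_t=t$ survives in $u_A$ and $\min(u_A)\leq t=\min(S_A)$.

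For (b)(ii), set $B=\{k_{f+1},\ldots,k_s\}$, which by maximality of $f$ is the contiguous tail of $A$ running up to $n$, so $\max(S_A)=k_{f+1}-1$. Applying the greedy with the block $B$ to $u$ strips off precisely the indices $i_{g+1},\ldots,i_d$, so $u_B=x_{i_1}\cdots x_{i_g}$ with $\max(u_B)=i_g\leq k_{f+1}-1$. By (\ref{double}) one has $u_A=(u_B)_{A\setminus B}$ with $A\setminus B=\{k_1,\ldots,k_f\}$; Lemma~\ref{notexist} applied to the pair $(u_B,A\setminus B)$ of degree $g$ and cardinality $f$ characterizes the survival of $i_g$ in $u_A$ as $l(f-j)<g-j$ for $j=0,\ldots,f-1$. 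The main obstacle will be the careful index bookkeeping in passing from the original sequence $i_1<\cdots<i_d$ to the truncated sequence $i_1<\cdots<i_g$ on which Lemma~\ref{notexist} is invoked, and verifying that these inequalities together with the defining properties of $f$ and $g$ really do pin down the equality $\max(u_A)=\max(S_A)$, i.e., force $i_g=k_{f+1}-1$ in the relevant cases.
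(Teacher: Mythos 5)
Your proposal follows essentially the same route as the paper: the equivalence (a)$\iff$(b) is obtained by passing to the monomial localization $I(P_A)$ (via Theorem~\ref{localization}) and applying Theorem~\ref{important} in $S_A$, while the combinatorial reformulations of (b)(i) and (b)(ii) come from the iterated deletion rule (\ref{local}), the definitions of $h$, $f$, $g$, and Lemma~\ref{notexist}. The paper's own proof is only two sentences long, so your fleshed-out greedy analysis of $u_A$ is in fact more detailed than what the author records, but it is the same argument.
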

 \begin{proof}
 One has $P_A\in \Ass^{\infty}(I)$ if and only if $\mm_A\in \Ass^{\infty}(I(P_A))$. Therefore, the equivalence of  (a) and (b) follows from Theorem~\ref{important}. The remaining statements follow from the definition of the numbers $h$, $f$ and $g$ and from Lemma~\ref{notexist}.
 \end{proof}

 Consider the  following simple, concrete example of a squarefree strongly stable principal ideal $I$ with Borel generator $u=x_1x_3x_4x_5 \in K[x_1,\ldots,x_5]$. We use Theorem~\ref{main} to determine the subsets $A\subset [5]$ for which $P_A\in \Ass^{\infty}(I)$. The last column of the table gives the smallest power of $I^k$ of $I$ with  $P_A\in \Ass(I^k)$.

\vspace{1cm}
\begin{center}
\begin{tabular}{ |c | c | c | c|}
  \hline
  $A$ & $u_A$ & $P_A$ & $\lambda(P_A;I)$ \\\hline
  $\{2,3,4,5\}$ & $x_1$ & $(x_1)$ & 1 \\\hline
  $\{1,2,5\}$ & $x_4$ & $(x_3,x_4)$ & 1 \\\hline
  $\{1,3,4\}$ & $x_5$ & $(x_2,x_5)$ & 1 \\\hline
  $\{1,3,5\}$ & $x_4$ & $(x_2,x_4)$ & 1 \\\hline
  $\{1,4,5\}$ & $x_3$ & $(x_2,x_3)$ & 1 \\\hline
  $\{1,2,3\}$ & $x_5$ & $(x_4,x_5)$ & 1 \\\hline
  $\{1,2,4\}$ & $x_5$ & $(x_3,x_5)$ & 1 \\\hline
  $\{1,2\}$ & $x_4x_5$ & $(x_3,x_4,x_5)$ & 2  \\\hline
  $\{1,3\}$ & $x_4x_5$ & $(x_2,x_4,x_5)$ & 2 \\\hline
  $\{1,4\}$ & $x_3x_5$ & $(x_2,x_3,x_5)$ & 2 \\\hline
  $\{1,5\}$ & $x_3x_4$& $(x_2,x_3,x_4)$ & 2 \\\hline
  $\{1\}$ & $x_3x_4x_5$& $(x_1,x_2,x_3,x_4)$ & 3 \\\hline

\end{tabular}
\end{center}
 \medskip \medskip

\end{document}